\title{Determinacy and J\'onsson cardinals in $\lr$}
\keywords{J\'onsson cardinal, determinacy, partition properties, inner model, HOD}
\subjclass[2010]{03E02, 03E45, 03E55, 03E60}
\author{S. Jackson}
\email{jackson@unt.edu}
\address{Steve Jackson\\
Department of Mathematics\\
University of North Texas\\
Denton, TX 76203}
\author{R. Ketchersid}
\address{Richard Ketchersid\\
Department of Mathematics\\
University of Arizona\\
Tucson, AZ 85721}
\author{F. Schlutzenberg}
\email{farmer.schlutzenberg@gmail.com}
\author{W. H. Woodin}
\email{woodin@math.harvard.edu}
\address{W. Hugh Woodin\\
Professor of Mathematics and of Philosophy\\
Harvard University\\
Cambridge MA 02138}
\newcommand{\ad}{\mathsf{AD}}
\newcommand{\AD}{\ad}
\newcommand{\lr}{L(\mathbb{R})}
\newcommand{\cof}{\mathrm{cof}}
\newcommand{\hod}{\mathrm{HOD}}
\newcommand{\HOD}{\hod}
\newcommand{\mws}{\sM_\omega^{\#}}
\newcommand{\zfc}{\mathsf{ZFC}}
\newcommand{\ZFC}{\zfc}
\newcommand{\J}{\mathcal{J}}
\newcommand{\ZF}{\mathsf{ZF}}
\newcommand{\onto}{\twoheadrightarrow}
\newcommand{\Hull}{\mathrm{Hull}}
\newcommand{\crit}{\mathrm{crit}}
\newcommand{\lh}{\mathrm{lh}}
\newcommand{\col}{\mathrm{Coll}}
\newcommand{\Coll}{\col}
\newcommand{\om}{\omega}
\newcommand{\sats}{\models}
\newcommand{\OR}{\mathrm{OR}}
\newcommand{\es}{\mathbb{E}}
\newcommand{\conc}{\ \widehat{\ }\ }
\newcommand{\inter}{\cap}
\newcommand{\un}{\cup}
\newcommand{\sub}{\subseteq}
\newcommand{\Ult}{\mathrm{Ult}}
\newcommand{\rg}{\mathrm{rg}}
\newcommand{\pred}{\mathrm{-pred}}
\newcommand{\cross}{\times}
\newcommand{\RR}{\mathbb{R}}
\newcommand{\com}{\circ}
\newcommand{\ran}{\text{ran}}
\newcommand{\eps}{\epsilon}
\newcommand{\elem}{\prec}
\newcommand{\all}{\forall}
\newcommand{\card}{\mathrm{card}}
\newcommand{\sM}{\mathcal{M}}
\newcommand{\sN}{\mathcal{N}}
\newcommand{\sT}{\mathcal{T}}
\newcommand{\sF}{\mathcal{F}}
\newcommand{\sQ}{\mathcal{Q}}
\newcommand{\pow}{\mathcal{P}}
\newcommand{\HC}{\mathrm{HC}}
\newcommand{\rest}{\upharpoonright}
\newcommand{\cut}{\backslash}
\newcommand{\kappabar}{\bar{\kappa}}
\newcommand{\id}{\mathrm{id}}
\newcommand{\ins}{\unlhd}
\newtheorem{thm}{Theorem}[section]
\newtheorem{lem}[thm]{Lemma}
\newtheorem{cor}[thm]{Corollary}
\newtheorem{case}{Case}
\newtheorem{subcase}{Subcase}
\theoremstyle{definition}
\newtheorem{claim}{Claim}
\newtheorem*{claim*}{Claim}
\newtheorem{dfn}[thm]{Definition}
\newtheorem{rem}[thm]{Remark}
\newtheorem{ques}[thm]{Question}
\newcommand{\bfSigma}{\mathbf{\Sigma}}
\begin{document}

\begin{abstract}
Assume $\ZF+\ad+V=\lr$ and let $\kappa<\Theta$ be an uncountable cardinal. We
show that $\kappa$ is J\'onsson, and
that if $\cof(\kappa)=\om$ then $\kappa$ is Rowbottom. We also establish some
other partition properties.
\end{abstract}
\maketitle
\section{Introduction} \label{sec_introduction}

Assume $\ad+V=\lr$ and let $\kappa<\Theta$ be an uncountable
cardinal. We will show that $\kappa$ is J\'onsson. We also show that if
$\cof(\kappa)=\om$ then $\kappa$ is Rowbottom. If $\kappa$ is regular then by
Steel's result, $\kappa$ is measurable and so Rowbottom (see
\cite[8.27]{outline}). So $\kappa$ is Rowbottom iff
$\cof(\kappa)\in\{\om,\kappa\}$.
However, we also show that irrespective of its cofinality,
$\kappa$ satisfies a partition property generalizing Rowbottomness, and also
satisfies another partition property, superficially stronger than J\'onssonness.

The history of these results are as follows. Kleinberg showed by pure
determinacy arguments that all (uncountable cardinals) $\kappa<\aleph_\om$ are
J\'onsson and that $\aleph_\om$ is Rowbottom (see \cite{kleinberg}). Jackson,
extending some work
joint with B. L\"owe, showed, also by determinacy arguments, that all
$\kappa<\aleph_{\omega_1}$ are J\'onsson, and
any $\kappa <\aleph_{\omega_1}$ of cofinality $\omega$ is Rowbottom.
Woodin then announced the result that all $\kappa<\Theta$ are
J\'onsson, and that this could be shown using the directed system
analysis of $\hod$. Later, motivated by Woodin's announcement, Jackson,
Ketchersid, and Schlutzenberg independently proved the same result in joint work,
along with
the Rowbottom and other partition results, also through the directed system
analysis. The proof depends on Lemma~\ref{strongerlem}, though 
only makes direct use of the weaker
Corollary~\ref{weakercor}. Our original argument established this
corollary directly. Schlutzenberg proved the stronger Lemma~\ref{strongerlem},
and we included this as it may be of independent interest.

Woodin has in fact proved that assuming $\ZF+\AD^+$, every uncountable cardinal
$\kappa<\Theta$ is J\'onsson. Here we limit ourselves to assuming $V=\lr$.

We now give some notation and recall some definitions. For any set $X$ and
$n<\om$, $[X]^n$ denotes the set of subsets of $X$ of cardinality $n$, and
$[X]^{<\om}$ the finite subsets. We let $\| A\|$ denote the cardinality of $A$.
We use the following partition terminology. Let
$\kappa,\gamma,\delta$ be cardinals. We
write
\[ [\kappa]^{<\om}_\delta\rightarrow[\kappa]^{<\om}_\gamma \]
iff for every function $F\colon[\kappa]^{<\om}\to\delta$ there is $A\sub\kappa$
such that
\[ \|A\|=\kappa\ \ \mathrm{and}\ \ \|F``[A]^{<\om}\|\leq\gamma.\]
We also write
\[ [\kappa]^{<\om}_{<\delta}\rightarrow[\kappa]^{<\om}_{\gamma} \]
iff $[\kappa]^{<\om}_\lambda\rightarrow[\kappa]^{<\om}_{\gamma}$ for each
cardinal $\lambda<\delta$. The notation
$[\kappa]^n_\delta\rightarrow[\kappa]^n_\gamma$, etc, is defined similarly.

Let $\kappa$
be an uncountable cardinal. Recall that:

$\kappa$ is \textit{Rowbottom} iff
$[\kappa]^{<\om}_{<\kappa}\rightarrow[\kappa]^{<\om}_\om$.

$\kappa$ is \textit{J\'onsson} iff for every $F \colon
[\kappa]^{<\omega} \to \kappa$
there is $A \subseteq \kappa$ such that
\[ \|A\|=\kappa\ \ \mathrm{and}\ \ F``[A]^{<\omega} \neq \kappa.\]

\section{Main Results}

We now state our main results.

\begin{thm}\label{mainthm}
Assume $\ad+V=\lr$. Let $\kappa<\Theta$ be an uncountable
cardinal. Then:
\begin{itemize}
\item[\textup{(}a\textup{)}] If $\cof(\kappa)=\omega$ then $\kappa$ is
Rowbottom.
\item[\textup{(}b\textup{)}] In fact, in general,
$[\kappa]^{<\om}_{<\kappa}\rightarrow[\kappa]^{<\omega}_{\cof(\kappa)}$ and
$[\kappa]^{<\om}_{<\cof(\kappa)}\rightarrow[\kappa]^{<\omega}_{\omega}$.
\item[\textup{(}c\textup{)}] $\kappa$ is J\'onsson.
\item[\textup{(}d\textup{)}] In fact, let $\lambda$ be a
cardinal such that $\om_1\leq\lambda\leq\kappa$. Let
\[ F\colon [\kappa]^{<\om}\to\lambda.\]
Then there is $A\sub\kappa$ such that:
\begin{itemize}
\item $\|A\|=\kappa$;
\item $\|\lambda\cut F``[A]^{<\om}\|=\lambda$; in fact,
$\lambda\cut F``[A]^{<\om}$ contains a club subset of $\lambda$ of cardinality
$\lambda$.
\end{itemize}
\end{itemize}
\end{thm}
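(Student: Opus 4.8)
The strategy is to reduce, using $\AD$ and $V=\lr$, to functions lying in $\HOD_{x_0}$ for a fixed real $x_0$, and then to run a Kleinberg-style homogeneity argument powered by the directed system analysis of $\HOD_{x_0}$, with Corollary~\ref{weakercor} supplying the requisite coherence. We begin with three reductions. Part~(a) is the instance $\cof(\kappa)=\om$ of the first relation in~(b), and part~(c) is the instance $\lambda=\kappa$ of~(d) (using $\kappa\geq\om_1$), so it suffices to prove (b) and (d). Secondly, if $\kappa$ is regular then $\kappa$ is measurable by Steel's result and hence satisfies $\kappa\rightarrow(\kappa)^{<\om}_\lambda$ for every $\lambda<\kappa$, which immediately gives all of (b) and~(d); so we may assume $\kappa$ is singular, and in particular $\gamma:=\cof(\kappa)<\kappa$. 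Thirdly, since $\AD$ holds and $V=\lr$, any $F\colon\kappa^{<\om}\to\lambda$ can be coded by a set of ordinals and is therefore ordinal definable from a single real $x_0$ together with finitely many ordinal parameters (absorbed into the defining formula); as $\lambda\leq\kappa<\Theta$ the code is a bounded subset of $\Theta$, so $F\in\HOD_{x_0}\inter V_\Theta$. Thus it suffices to control, for a fixed real $x_0$, the action on $\kappa$ of functions lying in $\HOD_{x_0}\inter V_\Theta$.

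The plan is a Kleinberg-style homogeneity argument driven by the directed system analysis of $\HOD_{x_0}$. Fix a continuous increasing sequence $\vec\kappa=\langle\kappa_\xi:\xi<\gamma\rangle$ of cardinals cofinal in $\kappa$. The directed system analysis, relativized to $x_0$, presents the relevant initial segment of $\HOD_{x_0}$ as a direct limit $M_\infty$ of iterable hulls $M_a$ under commuting embeddings $\pi_{ab}$, with $\kappa=\pi_{a\infty}(\bar\kappa_a)$ and $F=\pi_{a\infty}(\bar F_a)$ for all sufficiently large $a$. Corollary~\ref{weakercor}, itself extracted from Lemma~\ref{strongerlem}, provides the combinatorial heart: for this $\kappa$ and $x_0$ it yields a coherent family of such embeddings (equivalently, a tree of measures) concentrating on subsets of $\kappa$ of ordertype $\kappa$, with the feature that for a set $A\sub\kappa$ generated by the family --- concretely $A=\rg(\pi)\inter\kappa$ for the appropriate $\pi$, or the set of coordinates of a sequence chosen measure-one --- one has $F``A^{<\om}\sub\rg(\pi)$ while $A$ has ordertype exactly $\kappa$, so $|A|=\kappa$.

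Granting this, each partition statement follows by bookkeeping according to how $\lambda$ compares with $\gamma$. For $\kappa\rightarrow[\cof(\kappa)]^{<\om}_{<\kappa}$, given $F\colon\kappa^{<\om}\to\lambda$ with $\lambda<\kappa$, homogeneity collapses the finite tuples from $A$ onto a set of at most $\gamma$ values determined by the coherent family, so $|F``A^{<\om}|\leq\gamma$; specializing $\gamma=\om$ yields Rowbottomness, giving~(a). For $\kappa\rightarrow[\om]^{<\om}_{<\cof(\kappa)}$, when $\lambda<\gamma$ elementarity together with the regularity of the critical points occurring in the family forces $F$ to be eventually absorbed, so $F``A^{<\om}$ collapses onto a countable set. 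For~(d), with $\om_1\leq\lambda\leq\kappa$, we choose $\pi$ so that $\crit(\pi)<\lambda$ and $\pi\res\lambda$ is far from surjective; then $\lambda\cut F``A^{<\om}\supseteq\lambda\cut\rg(\pi)$ has size $\lambda$. In the remaining subcase $\lambda=\cof(\kappa)$ we additionally run the construction relative to a suitable club $C\sub\lambda$ (the closure points of an indexing of the family) so that $F``A^{<\om}\inter C=\emptyset$, making $F``A^{<\om}$ non-stationary.

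The principal obstacle is the simultaneous management of two opposing demands on $A$: that its ordertype be exactly $\kappa$, and that $F``A^{<\om}$ omit a large --- in the cofinality subcase, club-complemented --- subset of $\lambda$. Neither a single normal measure nor a single iterable mouse suffices for this in general, since (for example) $\aleph_\om$ is not a limit of measurables in any simple way; it is precisely the stability of $\kappa$ in the direct limit $M_\infty$, together with the coherence of the associated embeddings delivered by Corollary~\ref{weakercor}, that lets the homogeneity argument go through uniformly for every uncountable $\kappa<\Theta$ regardless of cofinality. Verifying that the generated set has the right ordertype, performing the case split on $\lambda$ versus $\cof(\kappa)$, and securing the non-stationarity refinement constitute the remaining work.
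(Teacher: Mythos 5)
There is a genuine gap at the heart of your argument, and you have in fact flagged it yourself: you cannot simultaneously arrange $A=\rg(\pi)\inter\kappa$ (so that $F``A^{<\om}\sub\rg(\pi)$ by elementarity) and $\mathrm{ot}(A)=\kappa$. The structures $M_a$ in the directed system are countable, so for any single direct-limit embedding $\pi=\pi_{a\infty}$ the set $\rg(\pi)\inter\kappa$ is countable; taking a union over all $a$ restores size $\kappa$ but destroys all control of $F``A^{<\om}$. You have also misread Corollary~\ref{weakercor}: it does not deliver a ``coherent family of embeddings concentrating on subsets of $\kappa$ of ordertype $\kappa$''; it says only that in $\hod_x$, $\kappa$ is measurable or a limit of measurables. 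The directed system analysis (together with the generator and hull-property bookkeeping of Lemma~\ref{techlem}) is used \emph{only} to prove that statement, not to manufacture homogeneous sets directly.

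The actual proof resolves your ``principal obstacle'' by changing venue rather than by refining the embeddings. Fix $x$ with $F$ and a cofinal $f\colon\cof(\kappa)\to\kappa$ in $\hod_x$. By Corollary~\ref{weakercor} and Steel's measurability result, $\hod_x$ satisfies $\ZFC$ plus ``$\kappa$ is measurable or a limit of measurables, and $\cof(\kappa)$ is measurable if uncountable''. One then proves a purely $\ZFC$ partition lemma (Lemma~\ref{secondlem}) and applies it \emph{inside} $\hod_x$: fix a cofinal sequence of measurables $\kappa_\alpha<\kappa$ with normal measures $U_\alpha$, use R\'owbottom/Kleinberg-style product-measure homogeneity on each finite product $\prod_i U_{a_i}^{t(i)}$, intersect, and take $A=\bigcup_\alpha X_\alpha$ with $X_\alpha\in U_\alpha$. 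Each $X_\alpha$ has size $\kappa_\alpha$, so $A$ has ordertype $\kappa$ --- this is exactly where a genuine measure (as opposed to the range of a countable hull) is indispensable --- and ordertype and $F``A^{<\om}$ are absolute between $\hod_x$ and $L(\RR)$. Note also that your remark that ``$\aleph_\om$ is not a limit of measurables in any simple way'' points in the wrong direction: the whole content of Corollary~\ref{weakercor} is that it \emph{is} a limit of $\hod_x$-measurables, and that single normal measures on those measurables do suffice. The non-stationarity clause of (d) is then obtained by a club-avoidance argument with these measures, not by a club built from an indexing of the directed system.
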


As a corollary to the proof of Theorem \ref{mainthm}, we obtain a simultaneous
partition property:

\begin{thm}\label{simultaneous}
Assume $\ad+V=\lr$. Let $\kappa<\Theta$ be an uncountable cardinal,
$\gamma_1,\lambda_1<\kappa$ and $\gamma_2,\lambda_2<\cof(\kappa)$.
For each $i\in\{1,2\}$, let
$\left<F^i_\alpha\right>_{\alpha<\gamma_i}$ be such that for each
$\alpha<\gamma_i$ we have
\[ F^i_\alpha\colon [\kappa]^{<\om}\to\lambda_i. \]
Then there is $A\sub\kappa$ such that $\|A\|=\kappa$
and
\begin{IEEEeqnarray*}{l}
\all\alpha<\gamma_1\left(\|F^1_\alpha``[A]^{<\om}\|\leq\cof(\kappa)\right);\\
\all\alpha<\gamma_2\left(\|F^2_\alpha``[A]^{<\om}\|\leq\om\right).
\end{IEEEeqnarray*}\end{thm}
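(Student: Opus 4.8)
The plan is to extract Theorem~\ref{simultaneous} from the \emph{construction} carried out in the proof of Theorem~\ref{mainthm}, not from its statement. The homogeneous set $A$ produced there is not pulled out of a partition relation as a black box; in the interesting (singular) case it is built from a fixed ``indiscernibility scaffold'' attached to $\kappa$ via the directed-system analysis of $\hod$ --- a partition $\left<A_\xi\right>_{\xi<\cof(\kappa)}$ of $A$ into blocks, with $A_\xi$ a set of indiscernibles (of the appropriate flavour) for a fixed structure $\sM_\xi$ read off from the analysis at the ordinals below $\kappa$. The crucial feature is that the scaffold $\left<\sM_\xi\right>_{\xi<\cof(\kappa)}$ depends only on $\kappa$, not on the function being tamed; the given $F$ enters only as an extra predicate thrown into the $\sM_\xi$ before the indiscernibles are chosen. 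Hence a single $A$ can serve many functions at once, provided they can all be thrown into the scaffold simultaneously. (When $\kappa$ is regular the scaffold is trivial and the whole statement reduces to simultaneous Rowbottomness for a $<\kappa$-sized family, which follows immediately from Steel's measurability of $\kappa$ by intersecting the $<\kappa$-many measure-one sets.)

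First I would carry out the reduction to a single scaffold. The two families $\left<F^1_\alpha\right>_{\alpha<\gamma_1}$ and $\left<F^2_\alpha\right>_{\alpha<\gamma_2}$ together form a collection of fewer than $\kappa$ functions, each with range an ordinal below $\kappa$; adjoining all of them as predicates to each $\sM_\xi$ leaves $\sM_\xi$ of size $\kappa$ and of the same complexity, so the construction of Theorem~\ref{mainthm} still applies and yields a single $A=\bigcup_\xi A_\xi$ whose blocks are simultaneously indiscernible for all of them. One then reads off the bounds exactly as in the proof of Theorem~\ref{mainthm}(b): for $\alpha<\gamma_1$, the value $F^1_\alpha(s)$ for $s\in A^{<\om}$ depends only on the block pattern of $s$ (which blocks its entries lie in, with repetitions and relative order) and on finitely much within-block type information, and there are at most $\cof(\kappa)$-many block patterns and $\om$-many within-block types, so $|F^1_\alpha``A^{<\om}|\le\cof(\kappa)$; and for $\alpha<\gamma_2$, since moreover $\mathrm{ran}(F^2_\alpha)\sub\lambda_2<\cof(\kappa)$, the argument of Theorem~\ref{mainthm}(b) giving $\kappa\rightarrow[\om]^{<\om}_{<\cof(\kappa)}$ applies verbatim to each $F^2_\alpha$ against the \emph{same} $A$, yielding $|F^2_\alpha``A^{<\om}|\le\om$. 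Running the two parts against a common scaffold --- at each block taking indiscernibles simultaneously for the structures relevant to both families --- gives the single $A$ required by the theorem; the only real bookkeeping is to ensure the within-block structures producing the sharper $\om$-bound for the $F^2$-part are compatible with those used for the $F^1$-part, which is arranged by the simultaneous choice of indiscernibles at each block.

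The main obstacle is justifying the ``parameter-robustness'' of the proof of Theorem~\ref{mainthm}: that enriching the fixed structures $\sM_\xi$ (and whatever is used at $\kappa$ itself) by $<\kappa$ extra functions of range $<\kappa$ disturbs no step --- in particular that the ordinals and structures extracted from the $\hod$-analysis are still correctly computed, that the chosen indiscernibles remain genuinely indiscernible for the \emph{enriched} structures, and that every partition property invoked along the way, both at $\kappa$ and at the auxiliary cardinals below $\kappa$ used to build the blocks, continues to hold in the stronger \emph{simultaneous} form needed for a $<\kappa$-sized family. This last point is where one must be slightly careful: for the auxiliary regular cardinals below $\kappa$ it is covered by Steel's result that such cardinals are measurable (a normal measure yields simultaneous homogeneity for any $<\kappa$-fold family of colorings at once), and for $\kappa$ itself it reduces to the observation that adding $<\kappa$ predicates changes neither the size nor the relevant first-order theory of the scaffold. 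These verifications --- carried out by rereading the proof of Theorem~\ref{mainthm} with the enriched family in place of a single $F$ --- rather than the final counting, are the substance of the argument.
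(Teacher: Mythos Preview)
Your overall plan --- re-examine the construction in the proof of Theorem~\ref{mainthm} and observe that it tames all the given functions at once --- is exactly what the paper intends; the paper itself says only that ``an examination of its proof also yields \ref{simultaneous}''. But your description of that construction does not match the paper's mechanism. The directed-system analysis is used only to establish Corollary~\ref{weakercor}; no ``structures $\sM_\xi$'' are read off from it, and nothing is done by adjoining predicates. Instead, the proof of Theorem~\ref{mainthm} chooses a single real $x$ with $F$ and a cofinal $f\colon\cof(\kappa)\to\kappa$ in $\hod_x$, and then runs the pure $\ZFC$ combinatorics of Lemma~\ref{secondlem} inside the $\ZFC$ model $\hod_x$, where $\kappa$ is measurable or a limit of measurables and $\cof(\kappa)$ (if uncountable) is measurable. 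The ``blocks'' are simply measure-one sets $X_\alpha$ for normal measures $U_\alpha$ on a cofinal sequence of measurables $\kappa_\alpha<\kappa$ in $\hod_x$.

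For Theorem~\ref{simultaneous} one therefore picks $x$ so that \emph{both} sequences $\langle F^i_\alpha\rangle_{\alpha<\gamma_i}$ together with $f$ lie in $\hod_x$, and then in the proof of Lemma~\ref{secondlem} one intersects, at each $\kappa_\alpha$, the measure-one sets arising from all $\gamma_1$-many functions $F^1_\alpha$ (and all types), and at $\mu=\cof(\kappa)$ the sets arising from all $\gamma_2$-many functions $F^2_\alpha$. The hypotheses $\gamma_1,\lambda_1<\kappa$ (so below $\kappa_0$ after choosing $\kappa_0$ large enough) and $\gamma_2,\lambda_2<\mu$ guarantee these intersections remain measure-one by completeness. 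So your ``parameter-robustness'' reduces to the $\kappa_\alpha$- and $\mu$-completeness of the relevant measures, and the scaffold is fixed not by $\kappa$ alone but by the choice of $x$, which must be made once for all the data.
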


By a standard argument, Theorem~\ref{simultaneous} easily implies the following
``two-cardinal'' 
result.

\begin{cor}
Assume $\ad+V=\lr$. Let $\kappa<\Theta$ be a cardinal, and suppose
\[ \om\leq\lambda_1<\cof(\kappa)\leq\lambda_2<\kappa.\]
Then
$(\kappa,\lambda_2,\lambda_1)\rightarrow(\kappa,\cof(\kappa),\om)$. That
is, for any first order structure
\[ M=(\kappa,\vec R, \lambda_2,\lambda_1)\]
with
universe $\kappa$, countably many predicates $\vec R$, and one-place predicates
$\lambda_1,\lambda_2$, there is $X\elem M$, with $X$ having universe
$A\sub\kappa$ with $\|A\|=\kappa$, and
$\|A\inter\lambda_2\|\leq\cof(\kappa)$, and $\|A\inter\lambda_1\|\leq\om$.
\end{cor}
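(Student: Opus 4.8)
The plan is to reduce the elementary-submodel statement to closure under Skolem functions and then apply Theorem~\ref{simultaneous} to a countable family of truncated Skolem functions. First I would expand $M$ to $M^+$ by adding definable Skolem functions; as the language of $M$ is countable this adds only countably many function symbols, and a subset of $\kappa$ is the universe of some $X\elem M$ as soon as it is closed under all the Skolem functions of $M^+$. I would then enumerate the Skolem terms so as to obtain functions $\langle G_n\rangle_{n<\om}$, each $G_n\colon\kappa^{<\om}\to\kappa$ (padding the arguments as needed), with the collection $\{G_n:n<\om\}$ closed under composition; then for every $A\sub\kappa$ the set $B_A:=\{G_n(s):n<\om,\ s\in A^{<\om}\}$ is closed under all $G_n$, hence is the universe of some $X\elem M$, and $A\sub B_A\sub\kappa$ with $|B_A|\leq\om\cdot|A^{<\om}|$.

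Next I would feed Theorem~\ref{simultaneous} two countable families. For $n<\om$ let $F^1_n\colon\kappa^{<\om}\to\lambda_2$ be $G_n$ truncated below $\lambda_2$ (so $F^1_n(s)=G_n(s)$ if $G_n(s)<\lambda_2$, and $F^1_n(s)=0$ otherwise), and let $F^2_n\colon\kappa^{<\om}\to\lambda_1$ be $G_n$ truncated below $\lambda_1$ in the same fashion. Taking $\gamma_1=\gamma_2=\om$, the hypotheses of Theorem~\ref{simultaneous} hold: $\om<\kappa$ and $\lambda_2<\kappa$, while $\cof(\kappa)>\lambda_1\geq\om$ gives both $\om<\cof(\kappa)$ and $\lambda_1<\cof(\kappa)$. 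The theorem then yields $A\sub\kappa$ with $|A|=\kappa$ such that $|F^1_n``A^{<\om}|\leq\cof(\kappa)$ and $|F^2_n``A^{<\om}|\leq\om$ for all $n<\om$.

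Finally I would take $X\elem M$ with universe $B:=B_A$. Then $|B|=\kappa$, since $\kappa=|A|\leq|B|\leq\om\cdot|A^{<\om}|=\kappa$. For $b\in B\inter\lambda_2$, writing $b=G_n(s)$ with $s\in A^{<\om}$, we have $b<\lambda_2$, hence $F^1_n(s)=b$; thus $B\inter\lambda_2\sub\bigcup_{n<\om}F^1_n``A^{<\om}$, a countable union of sets of size $\leq\cof(\kappa)$, so $|B\inter\lambda_2|\leq\cof(\kappa)$ (using $\cof(\kappa)>\om$). Likewise $B\inter\lambda_1\sub\bigcup_{n<\om}F^2_n``A^{<\om}$, a countable union of countable sets, so $|B\inter\lambda_1|\leq\om$. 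This is precisely $(\kappa,\lambda_2,\lambda_1)\rightarrow(\kappa,\cof(\kappa),\om)$.

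The content of the corollary is entirely carried by Theorem~\ref{simultaneous}; the only points needing (routine) care are that the enumeration of Skolem functions be closed under composition, so that the hull of $A$ is genuinely closed and of size $\kappa$, and that the truncations capture $B\inter\lambda_i$ rather than merely bound a larger set, both of which are immediate from the definitions. One could equally avoid expanding the language and work directly with the $\kappa$-many Skolem functions of $M$, but extracting a countable generating family and truncating it at $\lambda_1$ and $\lambda_2$ is what puts the conclusion into the exact form required by Theorem~\ref{simultaneous}.
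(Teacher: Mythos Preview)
Your proof is correct and follows essentially the same approach as the paper's: truncate Skolem functions at the two thresholds $\lambda_1,\lambda_2$, apply Theorem~\ref{simultaneous}, and take the Skolem hull of the resulting $A$ as the universe of $X$. The paper packages the countably many Skolem functions into a single $G\colon\kappa^{<\om}\to\kappa$ (so it only needs $\gamma_1=\gamma_2=1$ in Theorem~\ref{simultaneous}) and is terser about passing from $A$ to its hull, but the argument is the same.
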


\begin{proof}
Let $G\colon [\kappa]^{<\om}\to\kappa$ be a Skolem function for $M$. (Take $G$
such that whenever $B\sub\kappa$ has limit ordertype, then
$G``[B]^{<\om}\elem M$.)  Let
\[ F_i\colon [\kappa]^{<\om}\to\lambda_i+1\]
be
defined by $F_i(b)=\min(G(b),\lambda_i)$. If $A\sub\kappa$ 
witnesses Theorem \ref{simultaneous} with respect to $F_1,F_2$ then 
$X=G``[A]^{<\omega}$ is as required.
\end{proof}

\begin{rem} The partition properties in Theorem \ref{mainthm} are optimal in
certain ways. The property
\[ [\kappa]^1_\kappa\rightarrow[\kappa]^1_\lambda\]
is false for any $\lambda<\kappa$ and
\[ [\kappa]^1_{\cof(\kappa)}\rightarrow[\kappa]^1_{\lambda}\]
is
false for
any $\lambda<\cof(\kappa)$.

In Theorem \ref{simultaneous}, if $\kappa$ is singular, the
requirement that the $F^1_\alpha$'s be uniformly bounded by
$\lambda_1$ is necessary. For let
$\left<\gamma_\alpha\right>_{\alpha<\cof(\kappa)}$ be $\kappa$-cofinal
and let $F_\alpha\colon \kappa\to\kappa$ be given by
$F_\alpha(\beta)=\beta$ for $\beta<\gamma_\alpha$, and
$F_\alpha(\beta)=0$ otherwise. There is no $A$ as in Theorem
\ref{simultaneous} for this sequence.
\end{rem}

Before we start the proofs, we mention a couple of related questions:

\begin{ques} Assume $\ad+V=L(\RR)$. Do the partition properties of \ref{mainthm}
hold for any $\kappa\geq\Theta$?
Are there nonordinal J\'{o}nsson cardinals? In particular, is $\RR$
J\'{o}nsson?
\end{ques}

Ralf Schindler suggested the following question.

\begin{ques} What is the consistency strength of $\ZF+$``Every uncountable
cardinal $\kappa$ (or $\kappa<\Theta$) is J\'{o}nsson''?\end{ques}

The proof of Theorems \ref{mainthm} and \ref{simultaneous} proceed
through a few lemmas.
We first use the directed system analysis of
$\hod^{\lr}$ to prove \ref{strongerlem}. Its proof is related
to the proof of Steel's result \cite[8.27]{outline}, that if $\kappa$ is regular
and uncountable,
then it is measurable (under the same hypotheses).

\begin{lem} \label{strongerlem}
Assume $\ad+V=\lr$. Let $\kappa<\Theta$ be a cardinal such
that $\om_1<\kappa$, and let $x\in\RR$.
Then $\hod_x\sats$``there are $\kappa$-many measurables $<\kappa$''.
\end{lem}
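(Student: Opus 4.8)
The plan is to use the directed system analysis of $\hod^{\lr}$ to produce, inside $\hod_x$, a cofinal-in-$\kappa$ sequence of measurable cardinals. First I would recall the basic setup: by the work of Steel and Woodin (see \cite{outline}), $\hod^{\lr}$ is, above a certain real, a fine-structural ``mouse-like'' model whose large-cardinal structure below $\Theta^{\lr}$ is governed by the directed system $\mathcal{F}$ of iterable hybrid premice together with their iteration maps, and $\hod^{\lr}\res\Theta$ is (a Varsovian-style rearrangement of) the direct limit $M_\infty$ of this system. The key structural fact is that the critical points of the iteration maps $\pi_{\mathcal{M},\infty}\colon\mathcal{M}\to M_\infty$, as $\mathcal{M}$ ranges over the directed system, are cofinal in $\Theta^{\lr}$, and each such critical point is, in $M_\infty$, a measurable cardinal — the measure being the one derived from the tail of the direct limit. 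Passing to $\hod_x$ in place of $\hod$ only shifts the base of the directed system from $\emptyset$ to $x$ and does not affect this.

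The heart of the argument is then a reflection/boundedness step. Fix $\kappa$ with $\om_1<\kappa<\Theta$. I would first observe that $\kappa$ is below $\Theta^{\lr}=\Theta^{\hod_x}$ (using $x$-genericity and $\mathsf{AD}$), so $\kappa$ sits inside the direct limit $M_\infty$. The point is to show that the measurables of $M_\infty$ below $\kappa$ have ordertype $\kappa$. For this, let $\lambda<\kappa$ be arbitrary; I want a measurable of $M_\infty$ strictly between $\lambda$ and $\kappa$. Since the critical points of the iteration maps are club (or at least cofinal) in each $M_\infty|\gamma$ for $\gamma<\Theta$, and using the fact that $\kappa$, being an uncountable cardinal of $\lr$ below $\Theta$, is not ``trivial'' in $M_\infty$ — more precisely, using that there are cofinally many measurables of $M_\infty$ below $\kappa$ because $\kappa$ is a limit of critical points — one gets a measurable $\mu$ with $\lambda<\mu<\kappa$. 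Running this for all $\lambda<\kappa$ and noting the resulting set of measurables is a subset of $\kappa$ cofinal in $\kappa$ hence of ordertype $\kappa$ gives ``$\kappa$-many measurables $<\kappa$'' in $M_\infty$, and thus in $\hod_x$.

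The main obstacle, and where most of the work lies, is establishing that $\kappa$ is genuinely a limit of critical points of the directed system, i.e. that the measurables are \emph{cofinal} in $\kappa$ and not merely bounded below it. This requires knowing that $\kappa$, although it may be singular in $V=\lr$, is sufficiently large and ``reflecting'' in $M_\infty$; the relevant tool is that every uncountable cardinal of $\lr$ below $\Theta$ is a cutpoint-like ordinal of the Wadge hierarchy / of the directed system in a way that forces cofinally many iteration images below it — this is exactly the content analogous to Steel's measurability argument \cite[8.27]{outline}, which handles the regular case by deriving a single measure on $\kappa$ itself; here instead one localizes the argument below $\kappa$ and iterates it. I would structure this by choosing, for each $\lambda<\kappa$, an appropriate $\mathcal{M}$ in the directed system with $\crit(\pi_{\mathcal{M},\infty})\in(\lambda,\kappa)$, using a pressing-down / Wadge-rank argument on the (Lipschitz or Wadge) degrees coding subsets of $\kappa$ to guarantee that the critical point lands in the desired interval. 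The measurability of each such critical point in $M_\infty$ is then immediate from the standard tail-derived-measure argument. The final step — concluding the corresponding statement for $\hod_x$ rather than for the rearranged $M_\infty$ — is routine given the known identification of $\hod_x\res\Theta$ with $M_\infty$.
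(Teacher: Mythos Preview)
Your proposal has a genuine gap at the step ``the resulting set of measurables is a subset of $\kappa$ cofinal in $\kappa$ hence of ordertype $\kappa$''. This inference is valid only when $\kappa$ is regular. For singular $\kappa$ (which is the interesting case, since regular $\kappa$ is measurable outright by Steel's result), a cofinal set of measurables may well have ordertype $\cof(\kappa)<\kappa$. So even granting your cofinality step, you have only argued that in $\hod_x$ the measurables below $\kappa$ are unbounded in $\kappa$; this is exactly the content of the weaker Corollary~\ref{weakercor}, not of Lemma~\ref{strongerlem}. Your identification of ``the main obstacle'' as establishing cofinality is therefore off target: cofinality is not the hard part, and it does not suffice.

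The paper's proof proceeds quite differently. It is a counting argument by contradiction: assume the measurables of $\hod_x$ below $\kappa$ have ordertype $<\kappa$; then their limit points are bounded by some $\theta<\kappa$, and above $\theta$ the measurables form a discrete sequence $\langle\kappa_\alpha\rangle_{\alpha<\mu}$ with $\mu<\kappa$. One then analyzes the $j_n$-generators in each interval $[\gamma_\alpha,\kappa_\alpha)$ (where $\gamma_\alpha$ is the sup of almost finely measurable ordinals below $\kappa_\alpha$), and the technical Lemma~\ref{techlem} on hull properties and generators is invoked to show each such generator set has ordertype at most $\omega_1$. Since every ordinal below $\kappa$ lies in the $\Sigma_0$-hull generated by $\theta$, these generator sets, and the range of the countable iteration map, one obtains in $L(\RR)$ a surjection from $(\theta\times\mu\times\omega_1)^{<\omega}$ onto $\kappa$, contradicting that $\kappa$ is a cardinal. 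The substantive work is this hull/generator analysis, not the location of critical points; your proposal does not touch this.
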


\begin{cor}\label{weakercor}
Adopt the assumptions of Lemma \ref{strongerlem} other than ``$\kappa>\om_1$'';
assume $\kappa>\om$.
Then $\hod_x\sats$``either $\kappa$ is measurable or is a limit of
measurables''.
\end{cor}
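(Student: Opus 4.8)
The plan is to split into the cases $\kappa>\om_1$ and $\kappa=\om_1$. If $\kappa>\om_1$, the conclusion is essentially immediate from Lemma~\ref{strongerlem}. First note that since $\kappa$ is a cardinal of $V$ and $\hod_x\sub V$, $\kappa$ is a cardinal of $\hod_x$ (a surjection of a smaller ordinal onto $\kappa$ lying in $\hod_x$ would also lie in $V$). By Lemma~\ref{strongerlem}, $\hod_x\sats$``there are $\kappa$-many measurables $<\kappa$''. In $\hod_x$ a bounded subset of $\kappa$ has cardinality $<\kappa$, so any subset of $\kappa$ of cardinality $\kappa$ is cofinal in $\kappa$; hence the measurables of $\hod_x$ below $\kappa$ are cofinal in $\kappa$, i.e.\ $\hod_x\sats$``$\kappa$ is a limit of measurables'', which is what we want.

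Now suppose $\kappa=\om_1=\om_1^V$. Here I would invoke Solovay's theorem that under $\ad$ the club filter $\mathcal C=\{A\sub\om_1:A\supseteq C\text{ for some club }C\sub\om_1\}$ is a normal, and in particular $\om_1$-complete and nonprincipal, ultrafilter on $\om_1$, together with the observation that $\mathcal C$ is ordinal definable (its defining formula has no parameters). Then $\mathcal C\cap\hod_x$ is an $\OD$ subset of the $\hod_x$-set $\pow(\om_1)^{\hod_x}$ each of whose members lies in $\hod_x$, so $\mathcal C\cap\hod_x\in\hod_x$. It then suffices to verify that $\hod_x\sats$``$\mathcal C\cap\hod_x$ is an $\om_1$-complete nonprincipal ultrafilter on $\om_1$'', which witnesses that $\om_1$ is measurable in $\hod_x$. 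Being a nonprincipal filter is inherited from the corresponding properties of $\mathcal C$ in $V$; that it is an ultrafilter holds because for $A\in\pow(\om_1)^{\hod_x}$ both $A$ and $\om_1\setminus A$ lie in $\hod_x$ while exactly one of them lies in $\mathcal C$; and $\om_1$-completeness holds because if $\gamma<\om_1$ and $\langle A_\xi:\xi<\gamma\rangle\in\hod_x$ has each $A_\xi\in\mathcal C$, then $\bigcap_{\xi<\gamma}A_\xi\in\mathcal C$ by $\om_1$-completeness of $\mathcal C$ in $V$, and $\bigcap_{\xi<\gamma}A_\xi\in\hod_x$ since it is definable from the sequence.

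The content beyond Lemma~\ref{strongerlem} is confined to the case $\kappa=\om_1$, and the one subtlety there is that it is the full $\om_1^V$-completeness of $\mathcal C$, not merely its countable completeness, that must descend to $\hod_x$; this is exactly where being an inner model is used, since any $(<\om_1^V)$-sequence of subsets of $\om_1^V$ lying in $\hod_x$ also lies in $V$, and the resulting intersection returns to $\hod_x$. (Note that $\mathcal C\cap\hod_x$ then makes $\om_1^V$ inaccessible in $\hod_x$, so in fact $\om_1^{\hod_x}<\om_1^V$ in general, and the completeness genuinely had to be argued at $\om_1^V$ rather than at $\om_1^{\hod_x}$.)
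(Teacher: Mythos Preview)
Your argument is correct and matches the paper's implicit reasoning (the corollary is stated without proof): for $\kappa>\om_1$ the lemma gives the conclusion directly, and for $\kappa=\om_1$ one uses that the Solovay measure on $\om_1$ is $\OD$ and restricts to a witness of measurability in $\hod_x$. One minor correction: $\mathcal C\cap\pow(\om_1)^{\hod_x}$ is $\OD_x$ rather than $\OD$ (since $\hod_x$ is only definable from the parameter $x$), but that is exactly what is needed for membership in $\hod_x$, so the conclusion stands.
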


Working in $\zfc$, we then prove in Lemma \ref{secondlem}, some
partition properties for $\kappa$ such that $\kappa$ is either
measurable or a limit of measurables. We then prove the main theorems
by passing from $\lr$ into some $\hod_x^{\lr}$, applying Lemma
\ref{secondlem} there.

\section{Analysis of generators}
In this section we prove Lemma \ref{techlem}, which we need to prove
\ref{strongerlem}. This involves an analysis of generators produced by
certain iteration trees. We will deal in general with nonnormal, fine
iteration trees on premice, so as to give \ref{techlem} more generally.
However, for the purposes of proving \ref{strongerlem} it suffices to
consider only finite stacks of normal iteration trees.

We first discuss some nonstandard terminology and facts related to iteration
trees. For standard background, see \cite{fsit} and \cite{outline}.

Given a structure $\sN$ and $\mu\in\OR^{\sN}$, if $\mu$ is the largest
cardinal of $\sN$ then we let $(\mu^+)^{\sN}$ denote $\OR^{\sN}$.

Given a premouse $\sN$ and a limit ordinal $\alpha\leq\OR^\sN$, 
$\sN|\alpha$ denotes
the initial segment of $\sN$ of height $\alpha$, and
$\sN||\alpha$ its passive counterpart. Also, $F^\sN$ denotes the 
active 
extender of $\sN$, and $\es^{\sN}$ the 
extender sequence of $\sN$ ($\es^{\sN}$ does not include the active extender).

We take \textit{iteration tree} to be defined as
in \cite[Section 5]{fsit}, except that we drop condition (3)
(i.e., the condition ``$\alpha<\beta\implies\lh(E_\alpha)<\lh(E_\beta)$''), and
strengthen the first clause of condition (4), to:
\[ \text{``if}\ \sT\pred(\alpha+1)=\beta\ \text{then}\ 
\kappa=\crit(E_\alpha)<\rho_\gamma\ \text{for each
}\gamma\in[\beta,\alpha)\text{''}. \]
(Here $\rho_\gamma$ refers to $\nu(E_\gamma)$.) As in \cite[Section 5]{fsit},
$\sM^{*\sT}_{\alpha+1}$ denotes the ``preimage'' of 
$\sM^{\sT}_{\alpha+1}$, and is
some $\sN\ins \sM^{\sT}_{\sT\pred(\alpha+1)}$ such that 
\[ \pow(\kappa)\inter\sN=\pow(\kappa)\inter\sM^{\sT}_\alpha|\lh(E^{\sT}_\alpha).\] 
Any 
tree 
satisfying
all requirements of \cite[Section 5]{fsit} (including (3)), in fact satisfies the
strengthening of the first
  clause of (4) given above.

\begin{rem}\label{rem:*0} Let $\sT$ be an iteration tree and for $\alpha<\lh(\sT)$ let
$\sM_\alpha=\sM^\sT_\alpha$, $E_\alpha=E^\sT_\alpha$, etc. Let
$\delta+1<\lh(\sT)$ and $\beta=\sT\pred(\delta+1)$.  If $\sT$ is not
normal then $E_\delta$ might not be close to $\sM^*_{\delta+1}$,
but we still have preservation of the degree $n$ fine structure
between $\sM^*_{\delta+1}$ and $\sM_{\delta+1}$, where
$n=\deg(\delta+1)$. For instance,
$i^*_{\beta,\delta+1}(p_n^{\sM^*_{\delta+1}})=p_n^{\sM_{\delta+1}}$. See, for example, 
\cite[4.3,4.4]{fsit}. Also, letting $\mu=\crit(E_\delta)$,
\[ \sM_{\delta+1}|(\mu^+)^{\sM_{\delta+1}}=\sM^*_{\delta+1}||(\mu^+)^{\sM^*_{\delta+1}}. \]
\end{rem}

\begin{rem}\label{rem:*1} Let $\sT,\delta,\beta$ be as above and
$\mu=\crit(E_\delta)$.
Let $\gamma\in[\beta,\delta)$ be such that $\lh(E_\gamma)$ is
  minimal. Then we have (a), (b), (c) below:
  
   (a) $(\mu^+)^{\sM^*_{\delta+1}}\leq\lh(E_\eps)$ for
  each $\eps\in[\beta,\delta)$.
   
   (b) In fact:
   \begin{itemize}
    \item[-] For each
    $\eps\in[\beta,\gamma]$, $E_\gamma$ is on $\es^{\sM_\eps}\conc
    F^{\sM_\eps}$.
    \item[-] For each $\eps\in(\gamma,\delta]$,
  $\lh(E_\gamma)$ is a cardinal of $\sM_\eps$ and
  $\sM_\eps|\lh(E_\gamma)=\sM_\beta||\lh(E_\gamma)$.
  \item[-] If
  $\gamma'\in[\beta,\delta)$ and $\lh(E_{\gamma'})=\lh(E_\gamma)$ then
    $\gamma=\gamma'$.
    \end{itemize}
    
    Proof of (b): This follows by induction through
    $[\beta,\delta]$, using coherence, etc.
    
    Proof of (a): This follows from (b) since
    $E_\delta$ must measure exactly $\pow(\mu)\inter
    \sM^*_{\delta+1}$.
    
    (c) Suppose $(\mu^+)^{\sM^*_{\delta+1}}=\lh(E_\gamma)$. Then
    $\sM^*_{\delta+1}=\sM_\beta|\lh(E_\gamma)$, so $\mu$ is the 
largest
    cardinal of $\sM^*_{\delta+1}$ (recall that
    $\sM_\gamma|\lh(E_\gamma)$ projects strictly below $\lh(E_\gamma)$),
    $E_\gamma$ is type 2 (since $\mu<\nu(E_\gamma)$), and if also
    $\sT$ does not drop in model at $\delta+1$ then $\beta=\gamma$.\end{rem}

Given a premouse $\sM$ and $\kappa\in\OR^{\sM}$, say $\kappa$ is
\textit{finely measurable \textup{(}fm\textup{)} in $\sM$} iff there is
$E$ on $\es^{\sM}\conc F^{\sM}$ such that $\crit(E)=\kappa$ and $E$ is
total over $\sM$. Say $\kappa$ is \textit{almost finely
  measurable \textup{(}afm\textup{)} in $\sM$} if $\kappa$ is fm in
$\sM$ or if $\sM$ is active type $2$ and $\kappa$ is fm in
$\Ult_0(\sM,F^{\sM})$.

Let us make some observations on this definition.

\begin{rem}\label{rem:*2} Let $\sM$ be a premouse and $\kappa<\OR^\sM$. Then:

(a) If $\kappa$ is fm in $\sM$ then
$(\kappa^+)^\sM<\OR^\sM$.

(b) If $\sM$ is active and $\kappa$ is fm in
$U=\Ult_0(\sM,F^\sM)$ and $(\kappa^+)^\sM<\OR^\sM$ then $\kappa$ is fm
in $\sM$.

Proof of (a): If $\sN$ is active and
$\kappa=\crit(F^\sN)$ then $(\kappa^+)^\sN<\OR^{\sN}$.

Proof of (b): Let
$E$ witness the fine measurability of $\kappa$ in $U$, and let $G$ be
the normal measure segment of $E$. Then $G$ is on $\es^U\conc F^U$ and $(\kappa^+)^\sM<\OR^\sM$ 
and $\OR^\sM$ is a cardinal
of $U$ and $\sM||\OR^\sM=U|\OR^\sM$, which implies $\lh(G)<\OR^\sM$ and that $G$
is on $\es^\sM$, giving (b).\end{rem}

\begin{rem}\label{rem:*3} Let $\sT,\delta,\beta,\mu$ be as in \ref{rem:*1} and assume that
$\sT$ does not drop in model at $\delta+1$.  Then $\mu$ is afm in
$\sM_\beta$.

Proof: If $\beta=\delta$ the statement is trivial, so assume
$\beta<\delta$. Let $G$ be the normal measure segment of
$E_\delta$. We will show that $G$ witnesses the afm of $\mu$ in
$\sM_\beta$. We have $\sM^*_{\delta+1}=\sM_\beta$. Let $\gamma$ be as
in \ref{rem:*1}.  By \ref{rem:*1}(a), $(\mu^+)^{\sM_\beta}\leq\lh(E_\gamma)$.

Suppose first that $(\mu^+)^{\sM_\beta}$ exists in $\sM_\beta$. Using
\ref{rem:*1}(c),(b), we get that $(\mu^+)^{\sM_\beta}<\lh(E_\gamma)$ and
both are cardinals in $\sM_\delta$, so since $G$ is type 1 and $G$ is
$\sM_\beta$-total, $(\mu^+)^{\sM_\beta}<\lh(G)<\lh(E_\gamma)$, so $G$
is on $\es^{\sM_\beta}$, and $\mu$ is afm there.

Now suppose $\mu$ is the
largest cardinal of $\sM_\beta$. Then by (\ref{rem:*1})(a),(c), we have that
$E_\beta=F^{\sM_\beta}$ is type 2 and $\gamma=\beta$. Like in the previous case
(but considering the interval $[\beta+1,\delta)$)
then $G$ is on $\es^{\sM_{\beta+1}}$, and the agreement between
$\sM_{\beta+1}$ and $U=\Ult_0(\sM_\beta,E_\beta)$ gives that $G$ is on
$\es^U$.\end{rem}

\begin{rem}\label{rem:*4} Let $\pi:\sM\to\sN$ be
$\Sigma_1$-elementary between premice $\sM,\sN$, and let
$\tau\in\OR^\sM$. Suppose that if $\tau$ is the
largest cardinal of $\sM$ then $\pi(\tau)$ is likewise in $\sN$.  Then
$\tau$ is afm in $\sM$ iff $\pi(\tau)$ is afm in $\sN$.

Proof: First assume that $(\tau^+)^\sM$ exists in $\sM$. By
$\Sigma_1$-elementarity,
\[ \pi((\tau^+)^\sM)=(\pi(\tau)^+)^\sN.\]
So by
\ref{rem:*2} we need only consider fine measurability in $\sM,\sN$.  Now
$\tau$ is fm in $\sM$ iff there is $E$ on $\es^\sM\conc
F^\sM$ such that $\crit(E)=\tau$ and $(\tau^+)^\sM<\lh(E)$. This property is
$\Sigma_1$
in the parameter $(\tau^+)^\sM$, and it reflects to $\pi(\tau)$ in $\sN$.

Now suppose that $\tau$ is the largest cardinal of $\sM$,
so $\pi(\tau)$ is the largest cardinal of $\sN$.  So assume $\sM,\sN$
are type 2 and consider fine measurability in
$U^\sM=\Ult_0(\sM,F^\sM)$ and $U^\sN=\Ult_0(\sN,F^\sN)$. Let
\[ \psi\colon U^\sM\to U^\sN\]
be given by $\pi$ and the shift lemma. Then
$\psi$ is $\Sigma_1$-elementary and $\psi(\tau)=\pi(\tau)$, so the
statement reduces to the previous case. (If $U^\sM,U^\sN$ are not
wellfounded then the previous case doesn't literally apply, but the
first order properties of $U^\sM,U^\sN,\psi$ are sufficient.)\end{rem}

\begin{dfn} Let $\pi:\sM\to\sN$ be $\Sigma_1$-elementary between premice 
and $\gamma\in\OR^\sN$, such that $\gamma<\sup\pi``\OR^{\sM}$.

We say $\gamma$ is a \textit{generator} (relative to $\pi$) iff
$\gamma\neq\pi(f)(a)$ for any $f\in\sM$ and $a\in\gamma^{<\om}$.

We say that $\sN$ has the \textit{hull property at $\gamma$} (relative
to $\pi$) iff $\pow(\gamma)^\sN\sub H$, where $H$ is the transitive
collapse of $\Hull_0^\sN(\gamma\un\pi``\sM)$.
\end{dfn}

\begin{rem}\label{rem:*5} Let $\pi:\sM\to\sN$ be $\Sigma_1$-elementary and
$\gamma\in\OR^\sN$. Then we have (a), (b), (c), (d) below.

(a) If $\pi(\beta)>\gamma$, then $\gamma$ is a
generator iff $\gamma\neq\pi(f)(a)$ for all $f\colon \beta^{<\om}\to\beta$
and $a\in\gamma^{<\om}$.

(b) Suppose also $\sigma\colon \sN\to\sQ$ is
$\Sigma_1$-elementary. Then $\gamma$ is a generator for $\pi$ iff
$\sigma(\gamma)$ is a generator for $\sigma\com\pi$.

Proof of (b): Let $f\in\sM$,
and $\gamma\notin\pi(f)``\gamma^{<\om}$. This lifts under
$\sigma$, and vice versa.

(c) $\sN$ has the hull property
at $\gamma$ iff for every $A\in\pow(\gamma)^\sN$, there is $f\in\sM$
and $a\in\gamma^{<\om}$ such that $\pi(f)(a)\inter\gamma=A$.

(d) If $\sN$ is sufficiently iterable, has the hull property at $\gamma$, and $H$ is 
the transitive
collapse of $\Hull_0(\gamma \un\pi``\sM)$, then
$H||(\gamma^+)^H=\sN||(\gamma^+)^{\sN}$.

Proof of (d): Use condensation.

(However, it can happen that $H|(\gamma^+)^H\neq \sN|(\gamma^+)^{\sN}$.
For example, suppose $\sM$ is type 2, $\gamma$ is the largest cardinal of $\sM$
and $\gamma$ is afm in $\sM$.
Let $E\in\Ult_0(\sM,F^{\sM})$ witness the latter. Let $\sN=\Ult_0(\sM,E)$ and
let $\pi=i_E^{\sM}$. Then $\sN$ has the hull property at $\gamma$ and $H=\sM$,
so $H$ is active at $(\gamma^+)^H=(\gamma^+)^{\sN}$, but $\sN$ is passive
there.)\end{rem}

\begin{rem}\label{rem:normalstack}
The following lemma is the main result of this section. It applies to iteration
trees which aren't necessarily normal. However, for our intended application, one
may assume
that $\sT$ is a finite stack of normal trees $\sT_0,\ldots,\sT_{k-1}$,
and that $\eta,\xi$ are both indices of $\sT_{k-1}$.
\end{rem}

\begin{lem} \label{techlem}
Let $\sM$ be a fine-structural premouse, $\sT$ an iteration tree on
$\sM$, $\xi<\lh(\sT)$, $n=\deg^{\sT}(\xi)$.  Let
$\chi\in[0,\xi]_{\sT}$ be such that $(\chi,\xi]_\sT$ has no drops in
  model or degree. Let $\eta\in[\chi,\xi]_\sT$.

Let $\kappa\leq\rho_n^{\sM_\xi}$ be in the range of
$i^\sT_{\eta,\xi}$.  Let $\lambda$ be the sup of all $\gamma<\kappa$
such that $\gamma$ is afm in $\sM_\xi$.  It
follows that $\lambda\in\rg(i^\sT_{\eta,\xi})$ \textup{(}to be
established\textup{)}.

For $\alpha\in[\eta,\xi]_\sT$, let
$i^\sT_{\alpha,\xi}((\kappa_\alpha,\lambda_\alpha))=(\kappa,\lambda)$
and let $G_\alpha$ be the set of $i_{\chi,\alpha}$-generators in the
interval $[\lambda_\alpha,\kappa_\alpha]$.

\begin{enumerate}
\item[\textup{(}1\textup{)}] Suppose $\kappa<\rho_n^{\sM_\xi}$ and that 
$\sM_\eta$ has
the hull
property at every point in $[\lambda_\eta,\kappa_\eta]$, relative to
$i_{\chi,\eta}$. Then\textup{:}
\begin{enumerate}
 \item[\textup{(}a\textup{)}] $\sM_\xi$ has the hull property at every
point in $[\lambda_\xi,\kappa_\xi]$ relative to $i_{\chi,\xi}$.
 \item[\textup{(}b\textup{)}] For each $\alpha\in[\eta,\xi]_\sT$,
\[ i^\sT_{\alpha,\xi}``G_\alpha=G_\xi\inter\left(\{\kappa_\xi\}\un\sup
i_{\alpha,\xi}``\kappa_\alpha\right).\]
\item[\textup{(}c\textup{)}] If $\kappa$ is afm in $\sM_\xi$ then
$\{\kappa_\xi\}\un G_\xi\cut\sup i_{\eta,\xi}``\kappa_\eta$ is a
closed set of inaccessibles of $\sM_\xi$.
\item[\textup{(}d\textup{)}] If $\kappa$ is not afm in
$\sM_\xi$ then $G_\xi=i_{\eta,\xi}``G_\eta$.
\end{enumerate}
\item[\textup{(}2\textup{)}] Suppose $\kappa=\rho_n^{\sM_\xi}$. Then\textup{:}
\begin{enumerate}
\item[\textup{(}a\textup{)}] Either $G_\xi=\emptyset$ or
$G_\xi=\{\kappa_\xi\}$. Moreover, $G_\xi=i_{\eta,\xi}``G_\eta$.
\item[\textup{(}b\textup{)}]
\[ \sM_\xi|\rho_n^{\sM_\xi}\sub\Hull_0^{\sM_\xi}(\lambda_\xi\un
i_{\chi,\xi}``(\sM_\chi||\rho_n^{\sM_\chi})). \]
\end{enumerate}
\end{enumerate}
\end{lem}

\begin{proof} We start with the following claim, then prove parts (1) and (2).

\begin{claim} \label{c2} For $\alpha\in[\eta,\xi]_\sT$, let $\lambda_\alpha'$ be
the sup of all afm's $\gamma$ of $\sM_\alpha$ such that $\gamma<\kappa_\alpha$.
Then $i_{\alpha,\xi}(\lambda_\alpha')=\lambda=\lambda_\xi$. In particular,
$\lambda_\xi\in\rg(i_{\alpha,\xi})$, and $\lambda_\alpha=\lambda_\alpha'$.

Moreover, if $\lambda<\kappa=\rho_n^{\sM_\xi}$ then also
$\lambda\in\rg(i_{\chi,\xi})$.
\end{claim}

\begin{proof}
Note that if $\gamma$ is afm in $\sM$, then $\gamma$ is a limit
cardinal of $\sM$.

Let $\theta_\alpha$ be the largest limit cardinal $\theta$ of
$\sM_\alpha$ such that $\theta\leq\kappa_\alpha$.  So
$\theta_\xi=i_{\alpha,\xi}(\theta_\alpha)$ is likewise with respect to
$\sM_\xi$ and $\kappa_\xi$, and $\lambda_\alpha'\leq\theta_\alpha$ and
$\lambda_\xi'\leq\theta_\xi$.

By \ref{rem:*4}, $\theta_\alpha$ is afm in $\sM_\alpha$ iff $\theta_\xi$ is
afm is $\sM_\xi$. Moreover, let $\beta=\alpha$ or $\beta=\xi$. Given
$\gamma<\theta_\beta$, $\gamma$ is afm in $\sM_\beta$ iff $\gamma$ is
afm in $\sM_\beta|\theta_\beta$ (by the initial segment condition and
that $\theta_\beta$ is a limit cardinal of $\sM_\beta$). Therefore,
$\lambda_\beta'$ is definable over $\sM_\beta|\theta_\beta$ (possibly
$\lambda_\beta'=\theta_\beta$), uniformly in $\beta$.

It follows that
$i_{\alpha,\xi}(\lambda_\alpha')=\lambda_\xi'=\lambda_\xi$, as
required.

Finally, if $\kappa=\rho_n^{\sM_\xi}$ and $\lambda<\kappa$, then the fact that
$i_{\chi,\xi}``\rho_n^{\sM_\chi}$ is cofinal in $\rho_n^{\sM_\xi}$,
and the arguments above, show that $\lambda\in\rg(i_{\chi,\xi})$.
\end{proof}

We now prove (1). So assume $\kappa<\rho_n^{\sM_\xi}$ and the hull property
hypothesis of (1). Let (1)$_\xi$ be the conjunction of (1)(a)--(1)(d) (relative
to $\xi$). We proceed by induction on
$\alpha \in [\eta,\xi]_\sT$ to prove (1)$_\alpha$. The statement
(1)$_\eta$ is trivial.

We focus on the case that $\alpha=\delta+1>\eta$ for some $\delta$.
Let $\beta=\sT\pred(\alpha)$.  By induction, (1)$_\beta$ holds.  Let
$\mu=\crit(E_\delta)$, so $\mu<\rho_n^{\sM_\beta}$. By \ref{rem:*3} either
$\mu \leq\lambda_\beta$ or $\kappa_\beta\leq\mu$.

\begin{case}\label{c:easy} $\kappa_\beta<\mu$.\end{case}
Then (1)$_\alpha$ follows from (1)$_\beta$ and the
$\Sigma_0$-elementarity of $i_{\beta,\alpha}$.

\begin{case}\label{c:muleqk} $\mu \leq \kappa_\beta$.\end{case}

Fix $\gamma\in[\lambda_\alpha,\kappa_\alpha]$. We first establish that
the hull property holds at $\gamma$ for $\sM_\alpha$.  If $\gamma<\mu$
then this is as in Case \ref{c:easy} so assume $\mu\leq\gamma$.  Let
$\gamma^*\in\sM_\beta$ be least such that
$i_{\beta,\alpha}(\gamma^*)\geq\gamma$. So
$\mu\leq\gamma^*\in[\lambda_\beta,\kappa_\beta]$, and by (1)$_\beta$,
$\sM_\beta$ has the hull property at $\gamma^*$.  If $\mu=\gamma$ then
$\mu=\gamma^*$ and $\pow(\mu)^{\sM_\beta}=\pow(\mu)^{\sM_\alpha}$
(see \ref{rem:*0}), and the hull property of $\sM_\beta$ at $\mu$
then implies it of $\sM_\alpha$ at $\mu$. So assume $\mu<\gamma$.

Every $A\in\pow(\gamma)^{\sM_\alpha}$ is of the form
$A=[a,f]^{\sM_\beta}_{E_\delta}$, for some $a\in\nu(E_\delta)^{<\om}$
and $f\colon \mu^{\|a\|}\to\pow(\gamma^*)^{\sM_\beta}$ such that $f$ is
given by a generalized $r\bfSigma_n$ term over $\sM_\beta$.  In fact,
$f \in \sM_\beta$. For
$\gamma^*,\mu\leq\kappa_\beta<\rho_n^{\sM_\beta}$, so $f$ is coded by
a bounded subset of $\rho_n^{\sM_\beta}$ which is generalized
$r\bfSigma_n$ over $\sM_\beta$.  If
$\rho_n^{\sM_\beta}>(\kappa_\beta^+)^{\sM_\beta}$
this is clear, and if 
$\rho_n^{\sM_\beta}=(\kappa_\beta^+)^{\sM_\beta}$, it is because there
is $\gamma<\rho_n^{\sM_\beta}$ such that $\ran(f) \subseteq
\sM_\beta|\gamma$; for if $f$ is unbounded then certainly $n\geq 1$,
and as in \cite[p.66]{fsit} one can then use $f$ to give a generalized
$r\bfSigma_n$ definition of a subset $W$ of $\kappa_\beta$ giving a
wellorder of length $(\kappa_\beta^+)^{\sM_\beta}$, but then
$W\in\sM_\beta$, contradiction.

So in fact $f\in\sM_\beta$ and $A=i_{\beta,\alpha}(f)(a)$.  By the
hull property at $\gamma^*$ there is $f'\in\sM_\beta$ such that
\[ f'\in\Hull_0^{\sM_\beta}(\gamma^*\un i_{\chi,\beta}``\sM_\chi) \]
and such that $f'(b)\inter\gamma^*=f(b)$ for all
$b\in\mu^{<\om}$. Therefore, letting
\[ A'=i_{\beta,\alpha}(f')(a),\] we
have $A'\inter\gamma= A$, and
\[ A'\in\Hull_0^{\sM_\alpha}(a\un i_{\beta,\alpha}``\gamma^*\un
i_{\chi,\alpha}``\sM_\chi).\]
Now $i_{\beta,\alpha}``\gamma^*\sub\gamma$ (by choice of
$\gamma^*$). Let us see that we may assume $a\sub\gamma$, giving
\[ A'\in\Hull_0^{\sM_\alpha}(\gamma\un i_{\chi,\alpha}``\sM_\chi),\]
completing the proof of the hull property at $\gamma$.

Well, $a\sub\nu(E_\delta)<i_{\beta,\alpha}(\mu)$. If $\mu<\gamma^*$
then this gives $a\sub\gamma$. Otherwise $\mu=\gamma^*$. Since
$\gamma^*\in[\lambda_\beta,\kappa_\beta]$ and $\mu$ is afm in
$\sM_\beta$, we therefore have $\mu=\lambda_\beta$ or
$\mu=\kappa_\beta$. If $\mu=\lambda_\beta$ then
$\gamma=\lambda_\alpha$ (since $\gamma\geq\lambda_\alpha$ and
$\gamma\leq i_{\beta,\alpha}(\gamma^*)$), and therefore again
$a\sub\gamma$. So suppose $\lambda_\beta<\mu=\kappa_\beta$. So
$\kappa_\beta$ is a successor afm of $\sM_\beta$. Therefore, $E_\delta$
is an order $0$ measure, and we may take $a=\{\kappa_\beta\}$. But
$\gamma>\mu=\kappa_\beta$, so again $a\sub\gamma$, as required.

Next we examine the generators $G_\alpha$.

\begin{subcase}\label{sc:muleql} $\mu\leq\lambda_\beta$.\end{subcase}

We claim ($\dagger 1$): $G_\alpha=i_{\beta,\alpha}``G_\beta$.

Let us prove ($\dagger 1$). By \ref{rem:*5},
$i_{\beta,\alpha}``G_\beta=G_\alpha\inter\rg(i_{\beta,\alpha})$.

If $\lambda=\kappa$, this implies ($\dagger 1$), since
then for every $\eps\in[\eta,\xi]_\sT$ we have $\lambda_\eps=\kappa_\eps$, and
therefore either $G_\eps=\emptyset$ or $G_\eps=\{\kappa_\eps\}$.

So assume $\lambda<\kappa$. Let $\gamma\in G_\alpha$; so
$\gamma\in[\lambda_\alpha,\kappa_\alpha]$. Let $\gamma^*$ be least
such that $i_{\beta,\alpha}(\gamma^*)\geq\gamma$; so
$\gamma^*\in[\lambda_\beta,\kappa_\beta]$.
If $i_{\beta,\alpha}(\gamma^*)=\gamma$ then \ref{rem:*5} implies
that $\gamma^*\in G_\beta$. So assume $i_{\beta,\alpha}(\gamma^*)>\gamma$;
therefore, $\lambda_\beta<\gamma^*\leq\kappa_\beta$.
As before, there is $f\in\sM_\beta$ and
$a\in\nu(E_\delta)^{<\om}$ such that $f\colon \mu^{\|a\|}\to\gamma^*$ and
$i_{\beta,\alpha}(f)(a)=\gamma$. By the hull property at $\gamma^*$,
there is $g\in\sM_\chi$ and $b\in(\gamma^*)^{<\om}$ such that
\[ f=i_{\chi,\beta}(g)(b)\inter(\gamma^*)^2.\]
But then
\[ i_{\chi,\alpha}(g)(i_{\beta,\alpha}(b))(a)=\gamma,\] and
$(a\un i_{\beta,\alpha}(b))\sub\gamma$, so $\gamma\notin G_\alpha$,
contradiction. This proves ($\dagger 1$).

Finally, suppose $\kappa$ is afm in $\sM_\xi$; we must see that $X$ is a
closed set of inaccessibles of $\sM_\alpha$, where
\[ X=\{\kappa_\alpha\}\un G_\alpha\cut\sup i_{\eta,\alpha}``\kappa_\eta.\]
We have $\kappa_\alpha$ afm, and so
inaccessible, in $\sM_\alpha$. If $\lambda=\kappa$ then $X=\{\kappa_\alpha\}$,
so assume $\lambda<\kappa$. Then
$i_{\beta,\alpha}$ is continuous at each $\gamma\in X'$, where
\[ X'=\{\kappa_\beta\}\un G_\beta\cut\sup i_{\eta,\beta}``\kappa_\eta, \]
since by (1)$_\beta$ every $\gamma\in X'$ is inaccessible in
$\sM_\beta$ and
\[ \mu\leq\lambda_\beta<\gamma<\rho_n^{\sM_\beta}.\]
But $X=i_{\beta,\alpha}``X'$, so $X$ is closed. This completes the proof of
(1)$_\alpha$ in this subcase.

\begin{subcase} $\mu>\lambda_\beta$.\end{subcase}

By the case and subcase hypotheses, $\mu=\kappa_\beta>\lambda_\beta$.

We claim ($\dagger
2$): $G_\alpha=(i_{\beta,\alpha}``G_\beta)\un\{\kappa_\beta\}$.

Let us prove $(\dagger 2$). As before, since $\lambda_\beta<\kappa_\beta$ we
have that $E_\delta$
is type 1. Therefore $\kappa_\beta$ is the only
$i_{\beta,\alpha}$-generator, and ($\dagger 2$) then follows from the hull
property for $i_{\chi,\beta}$ at $\kappa_\beta$, like in the previous
subcase.

By $(\dagger 2)$,
\[ G_\alpha=(G_\beta\inter\kappa_\beta)\un\{\kappa_\beta\}\un Y,\]
where $Y=\{i_{\beta,\alpha}(\kappa_\beta)\}$ if $\kappa_\beta\in G_\beta$, and
$Y=\emptyset$ otherwise. Combined with (1)$_\beta$, this readily gives
(1)$_\alpha$ in this subcase.

This completes this subcase, Case \ref{c:muleqk} and the successor step of the
induction.

We mostly leave the case that $\alpha$ is a limit ordinal to the reader. 
However, let us observe
why
\[ \{\kappa_\alpha\}\un G_\alpha\cut\sup i_{\eta,\alpha}``\kappa_\eta\]is
closed. Fix $\gamma<\kappa_\alpha$, and let
$X=G_\alpha\inter(\gamma+1)$ and $Y=X\cut\sup i_{\eta,\alpha}``\kappa_\eta$. It
suffices to see that $Y$ is closed. 

We claim ($\dagger 3$): for any
limit $\beta\in(\chi,\xi]_\sT$, we have
\[ G_\beta=\bigcup_{\beta'\in[\chi,\beta)_\sT}i_{\beta',\beta}``G_{\beta'}.\] 

Indeed, $(\dagger 3)$ follows readily from \ref{rem:*5}.

Now let $\alpha'\in[\eta,\alpha)_\sT$ and $\gamma'<\kappa_{\alpha'}$ be such
that
$i_{\alpha',\alpha}(\gamma')=\gamma$. Then $X=i_{\alpha',\alpha}``X'$ where
$X'=G_{\alpha'}\inter(\gamma'+1)$, by ($\dagger 3$) and (1)$_\beta$ for
$\beta\in[\alpha',\alpha)_\sT$. Let $Y'=X'\cut\sup
i_{\eta,\alpha'}``\kappa_\eta$. Then $Y=i_{\alpha',\alpha}``Y'$, and 
$Y'$ is a closed set of non-afm inaccessibles of $\sM_{\alpha'}$, and
$Y'\sub\rho_n^{\sM_{\alpha'}}$, so $i_{\alpha',\alpha}$ is continuous at each
point of $Y'$, so $Y$ is closed.

This completes our discussion of the limit case, and so completes our proof
of (1).

We now prove (2). So suppose that $\kappa=\rho_n^{\sM_\xi}$. 
Let (2)$_\xi$ be the conjunction of
(2)(a) and (2)(b), relative to $\xi$.

If $\lambda=\kappa$, then
$\lambda_\alpha=\kappa_\alpha=\rho_n^{\sM_\alpha}$ for all
$\alpha\in[\eta,\xi]_\sT$, and $G_\alpha$ is at most
$\{\kappa_\alpha\}$.\footnote{Although we have
$\rho_n^{\sM_\alpha}\in\Hull_k^{\sM_\alpha}(\rg(i_{\chi,\alpha})\un\rho_n^{
\sM_\alpha})$ when $k=n$, the same might not hold when $k=0$. So
$G_\alpha\neq\emptyset$ is possible.} By \ref{rem:*5} we therefore have
$i_{\eta,\alpha}``G_\eta=G_\alpha$. The rest is trivial in this case.

Suppose $\lambda<\kappa$. Again \ref{rem:*5} gives (2)$_\alpha$(a). We have
$\lambda\in\rg(i_{\chi,\xi})$ by Claim \ref{c2}. For each
$\alpha\in[\chi,\xi)_\sT$, we have $\crit(i_{\alpha,\xi})<\rho_n^{\sM_\alpha}$
since there are no drops in degree in $(\chi,\xi]_\sT$. Now an induction like in
the proof of (1), but simpler, shows that (2)$_\alpha$(b) holds for
all $\alpha\in[\chi,\xi]_\sT$.\end{proof}

\section{Main proofs}

\begin{proof}[Proof of Lemma \ref{strongerlem}]
For simplicity, we only directly prove the conclusion of \ref{strongerlem}
with the assumption of ``$\ad+{V=\lr}$''
replaced by ``$\sM_\omega^\#$ exists and is
iterable in $V^{\col(\om,\pow(\RR))}$''.\footnote{This implies
$\ad^{\lr}$, but is stronger; see \cite{hodlr} for the proof of this and the
analysis of $\hod^{\lr}$ under this assumption.} This argument combined with
the argument of \cite[Section 7]{hodcore} then shows that the conclusion actually
follows from ``$\ad+V=\lr$''. Regarding the argument of \cite[\S7]{hodcore}
(and with notation as there), we only need the analysis of
$(\hod|\Theta)^{\J_\gamma(\RR)}$. (Thus, we do not need the arguments of
\cite{hodcore} analysing $\hod$ above $\Theta$.)

We recall a few facts from the analysis of $\hod^{\lr}|\Theta$ using $\mws$ (see
\cite{hodlr} or \cite{hodcore}). There is a directed system $\sF$ defined in
$\lr$,
consisting of premice and iteration maps, whose direct limit is
$\hod^{\lr}|\Theta$. For this we need consider only iteration trees
which are finite stacks $(\sT_0,\ldots,\sT_{n-1})$ of normal trees $\sT_i$, such
that for each $i+1<n$, the main branch of $\sT_i$ does not drop; call such
trees \textit{relevant}. There is a unique
strategy $\Sigma$ for $\mws$ having domain the set of
relevant trees on $\mws$, and which is an $(\om,\om,\om_1+1)$-strategy on its
domain. Let $G$ be generic for $\Coll(\om,\pow(\RR))$ and let $\Sigma'$ be the
corresponding strategy for $V[G]$. Then $\Sigma\sub\Sigma'$; this follows from
the homogeneity of the forcing.

Now in $V[G]$ there is a stack
$\left<\sT_i\right>_{i<\om}$ of
normal iteration trees such that:
\begin{itemize}
\item $\sT_0$ is on $\sQ_0=\sM_\om^\#$.
\item For each $i<\om$, $\sT_i\in\HC^V$.
\item Each $\sT_i$ has a non-dropping main branch and the first model of $\sT_{i+1}$ is the last 
model of $\sT_i$.
\item The stack is via $\Sigma'$ (equivalently, $\left<\sT_i\right>_{i<n}$ is
via $\Sigma$ for each $n<\om$).
\item Let $\sQ_\om$ be the direct limit of the $\sQ_i$'s under the iteration
maps. Let $\delta_0^\sN$ denote the least Woodin cardinal of a model 
$\sN$. Then
$\Theta=\delta_0^{\sQ_\om}$ and
$V_\Theta^{\hod^{\lr}}=V_\Theta^{\sQ_\om}$.
\item Let $i<\om$. Let $j_i:\sQ_i\to\sQ_\om$ be the $\Sigma'$-iteration map. Let
$\gamma<\delta_0^{\sQ_i}$. Then $j_i\rest(\sQ_i|\gamma)\in\lr$.
\end{itemize}

Now let $\kappa<\Theta$ be a cardinal of $\lr$ such that $\om_1<\kappa$. Let $A$ be the
set of measurables of $\hod^{\lr}$ below $\kappa$. Suppose $A$ has ordertype
$<\kappa$. Then the set of afm's  of $\sQ_\om$ below $\kappa$ also has ordertype
$<\kappa$ (for $\sQ_\om$, every afm is finely measurable since $\sQ_\om$ is not
type 2).\footnote{In fact, ``measurable'' implies ``finely measurable'' for
$\sQ_\om|\delta_0^{\sQ_\om}$, by \cite[Section 4]{mim}.}
Work in $\sQ_\om$. Note then that the afm limits of afm's $\leq\kappa$
are bounded by some $\theta<\kappa$. Let $X$ be the set
of afm's in the interval $(\theta,\kappa)$. If $X$ is bounded above by
some $\gamma<\kappa$, let $\mu=1$ and $\kappa_0=\kappa$. Otherwise let
$\left<\kappa_\alpha\right>_{\alpha<\mu}$ enumerate $X$, in strictly
increasing order. By choice of $\theta$, this sequence is
discontinuous everywhere. In either case, $\mu<\kappa$, and
in fact by increasing $\theta$ if
need be, we will assume $\mu<\theta<\kappa_0$. 

Now in $V[G]$, fix $n<\om$ such that $\theta,\kappa\in\rg(j_n)$. Let $j_n(\kappabar)=\kappa$.

For $\alpha<\mu$ let $\gamma_\alpha$ be the sup of all afm's $\gamma$
of $\sQ_\om$ such that $\gamma<\kappa_\alpha$. So
$\gamma_0\leq\theta$ and for $\alpha>0$,
$\gamma_\alpha=\sup_{\beta<\alpha}\kappa_\beta$. We have
$\gamma_\alpha<\kappa_\alpha$ by choice of $\theta$. Let $G_\alpha$ be
the set of all $j_n$-generators in the interval
$[\gamma_\alpha,\kappa_\alpha)$. Since
\[ \kappa=\theta\un\bigcup_{\alpha<\mu}[\gamma_\alpha,\kappa_\alpha)\]
    then $\theta\un\bigcup_{\alpha<\mu}G_\alpha$ includes all
    generators for $j_n$ below $\kappa$. Therefore,
\[
\sQ_\om|\kappa\sub\Hull_0^{\sQ_\om}\left(\theta\un\bigcup_{\alpha<\mu}
G_\alpha\un j_n``\sQ_n\right). \]
However, if $\gamma<\kappa$ is not a $j_n$-generator, by \ref{rem:*5}(b) there
is $f:\kappabar^{<\om}\to\kappabar$ and $a\in\gamma^{<\om}$ such that $f\in\sQ_n$ and
$\gamma=j_n(f)(a)$. Therefore,
\[
\sQ_\om|\kappa\sub\Hull_0^{\sQ_\om|(\kappa^+)^{\sQ_\om}}
\left(\theta\un\bigcup_{\alpha<\mu}G_\alpha\un j_n``(\sQ_n|(\kappabar^+)^{\sQ_n})\right).
\]
But $j_n\rest(\kappabar^+)^{\sQ_n}\in\lr$, and this segment of $j_n$
suffices to compute $\left<G_\alpha\right>_{\alpha<\mu}$. Therefore,
$\lr$ sees the previous fact.  Moreover, we claim that each $G_\alpha$
has ordertype $\leq\om_1$ (in fact, exactly $\om_1$). Therefore, the
previous fact gives a surjection
$(\sQ_n\cross\theta\cross\mu\cross\om_1)^{<\om}\onto\kappa$ in $\lr$. Since
$\sQ_n$
is countable and $\theta,\mu,\om_1<\kappa$, this shows that $\kappa$ is not a
cardinal in $\lr$, a contradiction.

So fix $\alpha<\mu$. Fix $m\geq n$ such that $\alpha\in\rg(j_m)$ and
let $i\geq m$. Let $G^i_\alpha$ be the set of $j_{n,i}$-generators in
the interval $[\gamma^i_\alpha,\kappa^i_\alpha)$, where ``superscript
  $i$'' denotes preimage under $j_i$. We would like to apply Lemma
  \ref{techlem} to deduce that $j_i``G^i_\alpha=G_\alpha\inter\sup
  j_i``\kappa^i_\alpha$.  Given this, then $G_\alpha\inter\sup
  j_i``\kappa^i_\alpha$ has ordertype $<\om_1$ (since
  $\sQ_i\in\HC^V$).  But $G_\alpha=\bigcup_{i\geq
    m}j_i``G^i_\alpha$, by \ref{rem:*5}. So $G_\alpha$ has ordertype
  $\leq\om_1$. (In fact $G_\alpha$ has ordertype $\om_1$, since enough
  normal iterates are absorbed by $\left<\sT_i\right>_{i<\om}$.)

So we just need to see that Lemma \ref{techlem} applies to the
iteration $\left<\sT_i\right>_{i<\om}$, with $\sM_\chi=\sQ_n$,
$\sM_\eta=\sQ_m$, and the ordinal $\kappa^m_\alpha$. We must see that
$\sQ_m$ has the hull property, relative to $j_{n,m}$, at every point
in $[\gamma^m_\alpha,\kappa^m_\alpha]$. Trivially, $\sQ_n$ has the
hull property, relative to $\id$, at every point in
$I=[\gamma_0^n,\kappa^n]$. As in the proof of Lemma \ref{techlem}, an
induction along on the branch $b$ leading from $\sQ_n$ to $\sQ_m$
shows that for every $\beta\in b$, $\sM_\beta$ has the hull property,
relative to $i_{\sQ_n,\beta}$, at every point in
$i_{\sQ_n,\beta}(I)$. This works because $\sQ_n$ has no afm limits of
afm's in $I$. Therefore, $\sQ_m$ has the hull property where required. This completes 
the proof.

(Our use of Lemma~\ref{techlem} can be reduced to the restricted
version described in Remark~\ref{rem:normalstack}. For this version can be
applied inductively to each $\sT_i$ in turn, and \ref{rem:*5} can be quoted
when passing to the direct limit of the stack.)\end{proof}

We now proceed to some $\ZFC$ results which we will apply inside $\HOD^{L(\RR)}$
in our proof of \ref{mainthm} and \ref{simultaneous}. These are variants of the
well-known fact that under $\ZFC$, if $\kappa$ is either a measurable or a
limit of measurables, then $\kappa$ is J\'{o}nsson.

Given $n<\om$ and measures $\mu_i$ over $X_i$ for $i<n$, we write
$\prod_{i<n}\mu_i$, or
\[ \mu_0\cross\ldots\cross\mu_{n-1},\] for the standard
product measure $\mu$ over $\prod_{i<n}X_i$. That
is, $A\in\mu$ iff
\[ \text{for }\mu_0\text{-almost all }x_0\text{, }\ldots\text{, for
}\mu_{n-1}\text{-almost
all }x_{n-1}\text{, }(x_0,\ldots,x_{n-1})\in A.\] If each $x_i$ is a finite set of
ordinals, we might blur the distinction between $(x_0,\ldots,x_{n-1})$ and
$x_0\un\ldots\un x_{n-1}$.

Part (a) of the following lemma is due to Prikry; see \cite{Pr} 
and \cite[8.7]{Kan}.
The remaining parts are straightforward variants and have proofs similar to that
result. We include the proof of all parts here for completeness.

\begin{lem}\label{secondlem} Assume $\zfc$.
Let $\kappa$ be either measurable or a limit of measurables. Then:
\begin{enumerate}
\item[\textup{(}a\textup{)}]
$[\kappa]^{<\om}_{<\kappa}\rightarrow[\kappa]^{<\om}_{\cof(\kappa)}$.

\item[\textup{(}b\textup{)}] If $\cof(\kappa)$ is measurable then
$[\kappa]^{<\om}_{<\cof(\kappa)}\rightarrow[\kappa]^{<\om}_{\om}$.

\item[\textup{(}c\textup{)}] Suppose either $\cof(\kappa)=\om$ or
$\cof(\kappa)$ is measurable. Let $\lambda\in[\om_1,\kappa]$ be a cardinal. Let
\[ F:[\kappa]^{<\om}\to\lambda. \]
Then there is $A\sub\kappa$ such that $\|A\|=\kappa$ and
\[ \|\lambda\cut F``[A]^{<\om}\|=\lambda,\]
and in fact, $\lambda\cut F``[A]^{<\om}$ contains a size $\lambda$ club
subset of $\lambda$.
\end{enumerate}
\end{lem}

\begin{rem}
Assume $\kappa$ is a limit of measurables. The proof that $\kappa$ is
J\'onsson (of which the proof of \ref{secondlem}(c) is a variant) can easily be
extended to show that if $F:[\kappa]^{<\om}\to\kappa$ then there is
$A\sub\kappa$ such that $\card(A)=\kappa$ and
\[ \card(\kappa\cut F``[A]^{<\om})=\kappa.\]
However, if $\cof(\kappa)>\om$ and $\cof(\kappa)$ is not J\'{o}nsson, then
it is easy to see that there is a function $F:[\kappa]^{<\om}\to\kappa$ and a
club subset $C$ of $\kappa$ of size $\cof(\kappa)$, such that for every
$A\sub\kappa$ that is cofinal in $\kappa$, we have
$C\sub F``[A]^{<\om}$, and
therefore, $F``[A]^{<\om}$ is stationary.
\end{rem}

\begin{proof}
If $\kappa$ is regular, then (a) is trivial, (b) just asserts that if $\kappa$
measurable then $\kappa$ is Rowbottom, and (c) follows from the arguments for
the nonmeasurable case. So we assume that $\kappa$ is a singular limit of
measurables.

Let $\mu=\cof(\kappa)<\kappa$. Fix a strictly increasing sequence
$\left<\kappa_\alpha\right>_{\alpha<\mu}$ of measurables $<\kappa$,
whose supremum is $\kappa$, with $\mu<\kappa_0$ and $\lambda<\kappa_0$ if
$\lambda<\kappa$, and such that
for each $\alpha<\mu$, $\gamma_\alpha<\kappa_\alpha$ where
$\gamma_\alpha=\sup_{\beta<\alpha}\kappa_\beta$. Fix a normal measure
$U_\alpha$ on each $\kappa_\alpha$, and if $\mu$ is measurable, fix a normal
measure $U$ on $\mu$.

First we prove (a) and (b); initially we work on both together. Fix
$\lambda<\kappa$ and $F:[\kappa]^{<\om}\to\lambda$.

For $n<\om$ let $T_n={^n}(\om\cut\{0\})$. For each
\[ a=\{a_0<\ldots<a_{\|a\|-1}\}\in{[\mu]}^{<\om} \]
such that $\|a\|\geq 1$ and each
$t\in T_{\|a\|}$, fix a sequence $\left<X_{a,t,i}\right>_{i<\|a\|}$ such that
each
$X_{a,t,i}\in U_{a_i}$ and
$X_{a,t,i}\inter\gamma_{a_i}=\emptyset$, and $F$ is
constant over $X_{a,t}$, where
\[ X_{a,t}=\prod_{i<\|a\|}[X_{a,t,i}]^{t(i)}. \]

For each $\alpha<\mu$, let $X_\alpha=\bigcap I$ where $I$ is the set
of all $X=X_{a,t,i}$ such that
$X\sub[\gamma_\alpha,\kappa_\alpha)$. There are at most $\mu$-many such
  $X$, so $X_\alpha\in U_\alpha$.

We now prove (a). Let (for the proof of (a))
\[ A=\bigcup_{\alpha\in\mu}X_\alpha.\]
We claim that
\[ \|F``[A]^{<\om}\|\leq\mu, \]
as required. For if $b\in {[A]}^{<\om}$
then there
is a unique pair $(a,t)$ such that $b\in X_{a,t}$, but $F$ is constant
over $X_{a,t}$, and there are only $\mu$-many such pairs $(a,t)$. This
completes the proof of (a).

We now prove (b), so assume $\lambda<\cof(\kappa)$. Let
\[ G\colon [\mu]^{<\om}\to\bigcup_n \left({^{(T_n)}}\lambda\right),\]
where for any $a\in[\mu]^{<\om}$,
\[ G(a)\colon T_{\|a\|}\to\lambda \]
is such
that $G(a)(t)=F(b)$ for some (every) $b\in X_{a,t}$. Since $\lambda<\mu$,
$\lambda^\om<\mu$, so we can fix $X\in U$ such that for each $n<\om$,
$G$ is constant over $[X]^n$.

Let (for (b))
\[ A=\bigcup_{\alpha\in X}X_\alpha.\]
We claim that
\[ \|F``[A]^{<\om}\|\leq\om,\]
as required. For if $b\in [A]^{<\om}$,
the value of $F(b)$ depends only on the ``type'' $t$ of $b$. That is,
$F(b_1)=F(b_2)$ whenever there are $a_1,a_2\in [X]^{<\om}$ such that
$\|a_1\|=\|a_2\|$, and $t\in
T_{\|a_1\|}$ such that $b_1\in X_{a_1,t}$
and $b_2\in X_{a_2,t}$. But there are only $\om$-many such pairs $(\|a\|,t)$.
This completes the proof of (b).

We now prove (c). We are given $\lambda,F$.

\setcounter{case}{0}
\begin{case}\label{case:lambda<mu} $\lambda<\mu$.\end{case}
So $\mu$ is measurable. Let $A\sub\kappa$ witness (b). Then $A$ works. If
$\cof(\lambda)>\om$ then this is immediate. Suppose $\cof(\lambda)=\om$. Let
$\left<\gamma_n\right>_{n<\om}$ be an increasing sequence of uncountable regular
cardinals, cofinal in $\lambda$. Let
\[ C_n=\gamma_n\cut((\sup F``[A]^{<\om})+1).\]
Let $C=\bigcup_{n<\om}C_n$. Then $C$ is club in $\lambda$, and is as required.

\begin{case}\label{case:om<mu<lambda<kappa&cof(lambda)/=mu_or_cof(lambda)=om}
$\mu<\lambda<\kappa$ and either $\cof(\lambda)\neq\mu$ or
$\cof(\lambda)=\om$.\end{case}
If $\cof(\lambda)=\om$ then use (a) combined with the argument for
(c), Case \ref{case:lambda<mu}.

If $\cof(\lambda)>\mu$ then the result follows from (a).

So suppose $\om<\cof(\lambda)<\mu$. Let the sets $X_{a,t}$ and $X_\alpha$ be
defined as in the proof of (a).  For each $n<\om$ and $t\in
 T_n$, let
 \[ F_t:[\mu]^n\to\lambda\] be defined by
$F_t(a)=F(u)$ where $u\in X_{a,t}$. Let $Y_t\in U$ be such that
$F_t``[Y_t]^n$ is bounded in $\lambda$. Let $Y=\bigcap_tY_t$. Let
$A=\bigcup_{\alpha\in Y}X_\alpha$. Then $F``[A]^{<\om}$ is bounded in
$\lambda$, so $A$ suffices.

\begin{case}\label{case:mu=lambda<kappa} $\om<\mu=\lambda<\kappa$.\end{case}
So $\mu$ is measurable. Let $X_{a,t}$ and $X_\alpha$ be defined as before, and
let $F_t$ be defined as in Case
\ref{case:om<mu<lambda<kappa&cof(lambda)/=mu_or_cof(lambda)=om}.
 
 Let $X_t\in U^n$ be such that for all $a,c\in X_t$ and $i<n$,
 we have $F_t(a)<a_i$ iff $F_t(c)<c_i$. In fact, take $X_t$ such that if
 $F_t(a)<a_i$ for $a\in X_t$, then $F_t(a)=F_t(c)$ whenever $a,c\in
 X_t$ are such that $a\rest i=c\rest i$.
 
 We can fix $X\in U$,
 and a sequence of functions $\left<G^m_i\right>_{m,i<\om}$, where
 \[G^m_i:[\mu]^m\to\mu,\] and for all $a\in [X]^m$, $G^m_i(a)\geq\max(a)$,
 and such that for each $n,t$ with $t\in T_n$, there are $m,i$, with
 $m\leq n$, such that for all $a\in [X]^n$,
 \[ F_t(a)=G^m_i(a\rest m).\]
(This includes constant functions $G^0_i$.)
 
 Now define a club $C\sub\mu$, with strictly increasing enumeration
 $\left<\delta_\alpha\right>_{\alpha<\mu}$, as follows. Let
 $\delta_0<\mu$ be least such that $\delta_0\notin\rg(G^0_i)$ for all
 $i<\om$. Given $\delta_\alpha$, let $\delta_{\alpha+1}$ be the least
 $\delta$ which is closed under all functions $G^m_i$ and
 $X\inter(\delta_\alpha,\delta)\neq\emptyset$. This determines
 $C$.
 
 Let $B=X\cut C$. Note that $B$ has ordertype $\mu$ and
 \[ G^m_i``[B]^{<\om}\inter C=\emptyset.\]
 Let
 \[ A=\bigcup_{\alpha\in B}X_\alpha. \]
Then $A$ has ordertype $\kappa$ and
\[ F``[A]^{<\om}\inter C=\emptyset, \]
so $A$ suffices.

\begin{case} $\om<\mu<\lambda\leq\kappa$ and $\cof(\lambda)=\mu$.\end{case}
Fix $\left<\lambda_\alpha\right>_{\alpha<\mu}$, a strictly increasing,
continuous sequence $\sub\lambda$, such that for each
$\alpha$, $\lambda_{\alpha+1}$ is a cardinal. Let $W:\lambda\to\mu$ be defined
by
$W(\beta)=\alpha$ where $\beta\in[\lambda_\alpha,\lambda_{\alpha+1})$. Let
  $G=W\com F$. By Case \ref{case:mu=lambda<kappa}, there is $A\sub\kappa$ of
  ordertype $\kappa$ and such that $G``[A]^{<\om}$ is nonstationary in
  $\mu$. Then $\lambda\cut F``[A]^{<\om}$ contains a club in $\lambda$ of size
$\lambda$.

\begin{case} $\om=\mu<\lambda=\kappa$.\end{case}
We will argue similarly to Case \ref{case:mu=lambda<kappa}. Let $T$ be the set
of functions $t\in{^{<\om}}\om$ such
that if $n=\lh(t)\neq 0$ then $t(n-1)\neq 0$. Given $t\in T$, define the
measure
\[ U_t=\prod_{i<\|t\|}U_i^{t(i)}.\]
For each $t\in T$, fix
$Y_t\in U_t$ such that there is $m<\om$ such that
$F``Y_t\sub\kappa_m$. For each
$i<\om$, fix $Y_i\in U_i$, with $Y_i\sub[\kappa_{i-1},\kappa_i)$
  (where $\kappa_{-1}=0$), and such that for each $t$,
\[ \prod_{i<\|t\|}[Y_i]^{t(i)}\sub Y_t. \]

Let $I$ be the set of pairs $(m,s)$ such that $m<\om$ and
$s\in\om^{m+1}$. There are sequences $\left<X_i\right>_{i<\om}$ and
$\left<H_{m,s}\right>_{(m,s)\in I}$ such that, with
\[ X_s=\prod_{i<\lh(s)} [X_i]^{s(i)},\]
we have
\begin{itemize}
\item $X_i\in U_i$ and $X_i\sub Y_i$,
\item $H_{m,s}:X_s\to[\kappa_{m-1},\kappa_m)$, where $\kappa_{-1}=0$,
\item for all $u\in X_s$, $H_{m,s}(u)\geq\max(u)$,
\item for each $t\in T$, there is $(m,s)$ such that
\begin{itemize}
 \item[(i)] either
\begin{itemize}
\item $s=t\rest\lh(s)$ for some $k$, or
\item $s=t\conc\left<0,\ldots,0\right>$, or
\item letting $j=\lh(s)-1$, we have $s\rest j=t\rest j$ and
$s(j)<t(j)$;
\end{itemize}
\item[(ii)] for all $u$ such that
\[ u\in\prod_{i<\|t\|}[X_i]^{t(i)},\]
we have $F(u)=H_{m,s}(u\rest l)$,
where $l=\sum_{i<\lh(s)}s(i)$.
\end{itemize}
\end{itemize}
This can be seen like in Case \ref{case:mu=lambda<kappa}.

Now let $C_n\sub\kappa_n$ be the club of points
$\gamma\in(\kappa_{n-1},\kappa_n)$ such
that for each $s\in\om^{n+1}$ we have
$H_{n,s}``[\gamma]^{<\om}\sub\gamma$. There are clubs $D_n\sub C_n$
and sets $A_n\sub\kappa_n$, each of ordertype $\kappa_n$, such that
$A_n\sub X_n\cut D_n$. Pick closed sets $D'_n\sub D_n$ such that $D'_n$ is
bounded in $\kappa_n$ and $D=\bigcup_{n<\om}D'_n$ has cardinality $\kappa$.

Let $A=\bigcup_{n<\om}A_n$.
Then $A$ is as required, as $D$ is club in $\kappa$,
\[ \|A\|=\kappa=\|D\|\]
and
$F``[A]^{<\om}\inter D=\emptyset$.
\end{proof}

We can now prove the main theorems.
We only explicitly prove \ref{mainthm}; an examination of
its proof also yields \ref{simultaneous}.

\begin{proof}[Proof of Theorems \ref{mainthm},
\ref{simultaneous}.]
  We first prove \ref{mainthm}(b),(d). Work in $L(\RR)$.

 Let $\kappa\in[\om_1,\Theta)$ be a cardinal and
let $F\colon[\kappa]^{<\om}\to\lambda\leq\kappa$.
  Let $\mu=\cof(\kappa)$ and let $f:\mu\to\kappa$ be
  cofinal. Fix $x\in\RR$ such that $f,F\in\hod_x$. We have
$\hod_x\sats\zfc+$``Either $\kappa$ is measurable or is a limit of measurables,
and either $\mu=\om$ or $\mu$ is measurable'', by Corollary \ref{weakercor}
and \cite[8.25]{outline}. So Lemma \ref{secondlem} applies there, yielding a
suitably homogeneous set $A\sub\kappa$. But then $A$ works in $V=\lr$ also.

 Part (b) also gives (a) (if $\cof(\kappa)=\om$ then $\kappa$ is Rowbottom).
Part (d), in the case that $\lambda=\kappa$, gives (c) (that $\kappa$ is
J\'onsson).
  \footnote{Here is a slightly alternative argument for J\'onssonness. Let
  $G\colon [\kappa]^{<\om}\to\kappa$. We need a set $A\sub\kappa$ of ordertype
  $\kappa$ such that $G``[A]^{<\om}\neq\kappa$. If $\kappa$ is regular
  then let $x\in\RR$ be such that $G\in\hod_x$ and use the fact
  that $\kappa$ is measurable, and therefore J\'onsson, in
  $\hod_x$. So assume $\mu=\cof(\kappa)<\kappa$.  If $\mu>\om_1$
  then let $\lambda=\om_1$ (then $\lambda<\mu$); if $\mu\leq\om_1$
  then let $\lambda=\om_2$ (then $\lambda<\kappa$ since $\kappa$ is
  singular). Let $F:[\kappa]^{<\om}\to\lambda+1$ be defined by
  $F(a)=\min(G(a),\lambda)$. By part (b),
  there is $A\sub\kappa$ of ordertype $\kappa$ such that if
  $\lambda=\om_1$ then $\|F``[A]^{<\om}\|\leq\om$, and if $\lambda=\om_2$
  then $\|F``[A]^{<\om}\|\leq\mu\leq\om_1$. In either case, it follows that
  $G``[A]^{<\om}\neq\kappa$, as required.}
\footnote{We didn't actually need the full analysis of $\hod|\Theta$ for the
proofs of either \ref{mainthm} or \ref{simultaneous}. For let $\psi$ be the
assertion that one of them fails. By
the coding lemma, $\psi$ is $\Sigma^2_1$. Then,
letting $\gamma$ be least such that
$\J_\gamma(\RR)\sats$``$\psi+\ZF^-+\pow(\pow(\RR))$ exists'',
it suffices to analyse $(\hod|\delta^2_1)^{\J_\gamma(\RR)}$, using the
argument of \cite{hod_core_below_theta}, combined with a reflection argument
like that in \cite[Section 7]{hodcore}.}
\end{proof}

\section{Acknowledgements}
The first author was supported in part by NSF grant DMS-1201290.
The third author was supported in part by NSF grant DMS-0801189, amendment 
no. 002; thanks are due to the staff at BSU for their help dealing with the grant paperwork.
\bibliographystyle{plain}
\bibliography{biblio}

\end{document}